\def\red{\color{black}}
\newcounter{rot}%\addtocounter{rot}{1}, \therot
\def\a{\alpha} \def\b{\beta}  
\def\e{\varepsilon}    \def\g{\gamma}
\def\G{\Gamma}  
 \def\th{\theta}    
\def\La{\Lambda}   \def\p{\pi}
\newtheorem{theorem}{Theorem}
\newtheorem{lemma}[theorem]{Lemma}
\newcommand{\rdup}[1]{{\left\lceil #1\right\rceil }}
\newcommand{\brac}[1]{\left(#1\right)}
\newcommand{\bfrac}[2]{\left(\frac{#1}{#2}\right)}
\newcommand{\set}[1]{\left\{#1\right\}}
\def\E{\mathbb{E}}
\def\Var{\mathbb{V}\text{ar}}
\def\Pr{\mathbb{P}}
\newcommand{\ignore}[1]{}
\newcommand{\beq}[2]{\begin{equation}\label{#1}#2\end{equation}}
\def\vK{\vec{K}}
\def\dg{\rm{dang}}
\def\bdg{\overline{\dg}}
\def\M{\text{Maker}}
\def\B{\text{Breaker}}
\begin{document}
\author{Alan Frieze\thanks{Research supported in part by NSF grant DMS1661063. Corresponding author.} and Wesley Pegden\thanks{Research supported in part by NSF grant DMS1363136}\\Department of Mathematical Sciences\\Carnegie Mellon University\\Pittsburgh PA 15213}

\title{Maker Breaker on Digraphs}
\maketitle

\begin{abstract}
We study two biased Maker-Breaker games played on the complete digraph $\vec{K}_n$. In the strong connectivity game, Maker wants to build a strongly connected subgraph. We determine the asymptotic optimal bias for this game viz. $\frac{n}{\log n}$.  In the Hamiltonian game, Maker wants to build a Hamiltonian subgraph.  We determine the asymptotic optimal bias for this game up to a constant factor.
\end{abstract}
{\bf Keywords:} Maker-Breaker, Digraphs, Strong connectivity, Hamiltonicity.
\section{Introduction}
We consider some biased Maker-Breaker games played on the complete digraph $\vK_n$ on $n$ vertices. This is in contrast to the large literature already existing on games played on the complete graph $K_n$. For a very nice summary of the main results in this area, we refer the reader to the monograph by Hefetz, Krivelevich, Stojakovi\'c and Szabo \cite{Book}. A typical example of such a game is the biased connectivity game first studied by Chv\'atal and Erd\H{o}s \cite{CE}. It is palyed on the complete graph $K_n$ and player Maker chooses an edge and then player Breaker chooses $b$ edges. Maker and Breaker's choices being disjoint. Maker will try to ensure that at the end of play, the graph $G_M$ induced by her edges is connected and Breaker will try to prevent this. It is clear that there is some threshold $b_0$ say, such that if $b<b_0$ then Maker will succeed and if $b\geq b_0$ then Breaker will win. Estimating $b_0$ is the challenge in this area. Chv\'atal and Erd\H{o}s gave an upper bound of $b_0\leq (1+\e)\frac{\log n}{n}$ in the connectivity game and Gebauer and Szab\'o \cite{GS} proved a lower bound of $b_0\geq (1-\e)\frac{\log n}{n}$. We can rephrase the connectivity game as that Maker wants to build a spanning tree of $K_n$ and Breaker wants to prevent this. As such we see that there is a corresponding game for any possible subgraph of $K_n$. In particular, Maker might wish to build a Hamilton cycle. This Hamiltonicity game was solved by Krivelevich \cite{K}.

Our aim here is to analyse the directed versions of the Connectivity Game and the Hamiltonicity Game. 

In the games analysed below, Maker goes first, claiming an edge of $\vK_n$. Breaker then claims $b$ edges and so on, with Maker and Breaker taking one and $b$ edges respectively until there are no edges left to take. In addition, Maker and Breaker must claim disjoint sets of edges. Maker is aiming to construct a digraph with certain properties and Breaker is aiming to prevent this. The properties involved are monotone increasing and so there is a critical bias, $b_0$ say, such that if $b<b_0$ then Maker will win and if $b\geq b_0$ then Breaker will win. We will consider two properties here: strong connectivity and Hamiltonicity. We let $D_M,D_B$ denote the digraphs with vertex set $[n]$ and the edges taken by Maker, Breaker respectively. Maker wins the strong connectivity game if on termination $D_M$ is strongly connected.  Maker wins the Hamiltonian game if on termination $D_M$ is Hamiltonian. 
\begin{theorem}\label{th1}
Let $\e>0$ be arbitrarily small and $n\geq n_\e$ sufficiently large. Then Breaker wins the strong connectivity game if $b\geq \frac{(1+\e)n}{\log n}$ and Maker wins if $b\leq \frac{(1-\e)n}{\log n}$.
\end{theorem}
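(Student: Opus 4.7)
The two implications require different strategies.

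\emph{Breaker wins when $b \geq (1+\e) n/\log n$.} I would reduce to a box game. The in-boxes $B_v^- := \{(u,v) : u \in V \setminus \{v\}\}$ for $v \in V$ are pairwise disjoint subsets of $E(\vK_n)$, each of size $n-1$; if Breaker claims every edge of some $B_v^-$, then $v$ has no in-edge in $D_M$ and so $D_M$ cannot be strongly connected. Hence Breaker can win the strong connectivity game by winning the box game on $n$ boxes of size $n-1$ with bias $b$ (Maker first). This is the classical box-game setting analyzed by Hamidoune--Las Vergnas and Gebauer--Szab\'o \cite{GS}: Breaker maintains the potential $\Phi = \sum_v 2^{c_v^B}$, where $c_v^B := |B_v^- \cap E(D_B)|$, and selects the $b$ edges per round that minimize $\Phi$; a routine calculation shows that some $c_v^B$ must reach $n-1$ before the game ends whenever $b \geq (1+\e) n/\log n$. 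Breaker ignores any edge not lying in some $B_v^-$.

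\emph{Maker wins when $b \leq (1-\e) n/\log n$.} Maker uses a two-phase strategy. In Phase~1, she runs a Gebauer--Szab\'o-type potential strategy against the $2n$ boxes $\{B_v^+, B_v^-\}_{v \in V}$ simultaneously (with $B_v^+ := \{(v, u) : u \neq v\}$); each Maker move decreases two box potentials at once, since $(u,v) \in B_u^+ \cap B_v^-$. Standard amortization (adapted slightly for this overlap) yields $\min_v d_M^\pm(v) \geq k$ at the end of Phase~1 for some $k = \Theta(\log n)$, provided the effective bias is $\leq (1-\e/2)n/\log n$. In Phase~2, using her remaining budget, Maker secures strong connectivity. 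An obstruction to strong connectivity is a partition $V = S \sqcup T$ with every directed edge from $S$ to $T$ in $E(D_B)$; Phase~1 already rules out $|S| < k$ or $|T| < k$ (a single Maker in- or out-edge at a vertex $v$ would cross such a cut), so the problematic cuts have $|S|, |T| \geq k$ and hence $|S|\,|T| \geq k(n-k) \gg n/b$. Viewing original Breaker as the ``Maker of a sub-game'' attempting to fully claim such a large set, a biased Beck/Erd\H{o}s--Selfridge-style potential argument---matching the random-digraph union bound $\sum_S (1-p)^{|A_S|} = o(1)$ for $p \sim \log n / n$---shows that Maker's Phase~2 moves suffice to maintain a Maker edge in every cut.

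\emph{Main obstacle.} The principal technical challenge is synthesizing Phases~1 and~2 into a single strategy whose total bias is $(1-\e)n/\log n$: Phase~1 must be slightly relaxed to leave Maker enough moves for Phase~2's cut patching, while still delivering the $k = \Theta(\log n)$ minimum-degree floor. The in-/out-box overlap in Phase~1 requires a careful double-count in the potential drop calculation, but this overlap actually helps Maker, producing the directed threshold $n/\log n$ (matching the undirected one and the random-digraph strong connectivity threshold) rather than a looser $n/(2\log n)$.
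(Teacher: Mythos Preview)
Your Breaker argument is the same as the paper's: both reduce to the Chv\'atal--Erd\H{o}s box game on the $n$ in-stars (equivalently, one side of the bipartite cover $K_{n,n}$).

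For the Maker side the paper takes a different route, and your Phase~2 has a real gap. The paper's Phase~1 is a \emph{randomised} minimum-degree strategy (Theorem~\ref{th1a}): Maker eases a vertex of maximum danger by picking a \emph{uniformly random} unclaimed edge at it, from a pool guaranteed to have size at least $(1-\a)n$. The danger analysis supplies the degree floor deterministically, but the randomness is then cashed in via a first-moment bound (Lemma~\ref{exp}): w.h.p.\ every $S$ with $|S|\le (1-\a)^2 n$ already has Maker edges both entering and leaving. Hence every source and every sink in the condensation of $D_M$ has at least $(1-\a)^2 n$ vertices, so there are at most $(1-\a)^{-2}$ of each, and Phase~2 consists of at most $\lceil(1-\a)^{-4}\rceil$ further rounds joining each sink to each source. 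No potential over cuts is ever needed.

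Your Phase~2 invokes a Beck/Erd\H{o}s--Selfridge potential over all directed cuts, and this does \emph{not} match the random-digraph heuristic you appeal to. Such criteria control a sum of the form $\sum_S 2^{-|S||\bar S|/b}$ (or with base $1+q$ for an appropriate bias $q$), whereas the union bound you want corresponds to $\sum_S e^{-|S||\bar S|/b}$; the discrepancy is the familiar $\ln 2$ loss. Concretely, with $b=(1-\e)n/\log n$ and $s=|S|\ll n$ the contribution is roughly $\binom{n}{s}\,n^{-s\ln 2/(1-\e)}$, which blows up term by term as soon as $\e<1-\ln 2\approx 0.307$. So for small $\e$ the cuts with $s$ between your degree floor $k=\Theta(\e\log n)$ and, say, $n^{1-\delta}$ are not controlled. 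Obtaining the tight constant via potentials would require pushing a Gebauer--Szab\'o-style danger argument through the entire directed-cut family rather than just the $2n$ stars---precisely the difficulty the paper sidesteps by making Phase~1 random, after which Phase~2 costs only $O_\e(1)$ rounds.
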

\begin{theorem}\label{th2}
Let $\e>0$ be arbitrarily small and $n\geq n_\e$ sufficiently large. Then Breaker wins the Hamiltonian game if $b\geq \frac{(1+\e)n}{\log n}$. Furthermore, there exists an absolute constant $\b>0$ such that Maker wins if $b\leq \frac{\b n}{\log n}$.
\end{theorem}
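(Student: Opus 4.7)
The upper bound $b \geq (1+\varepsilon)n/\log n$ is immediate from Theorem~\ref{th1}: since every Hamiltonian digraph is strongly connected, any Breaker strategy that prevents strong connectivity also prevents Hamiltonicity, and we simply recycle the Breaker strategy of Theorem~\ref{th1}. So only the lower bound is substantive, and we must exhibit a small absolute constant $\beta > 0$ such that Maker wins when $b \leq \beta n / \log n$.

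The plan is to adapt the expander-booster approach of Krivelevich's undirected Hamilton game \cite{K} to the directed setting. Maker's strategy has three phases. In Phase 1 (Min-degree), using $O(n)$ moves, Maker ensures that every vertex has in-degree and out-degree at least some constant $K$ in $D_M$; this is achieved via $2n$ parallel box games, one per (vertex, direction) pair, as in the proof of Theorem~\ref{th1}. In Phase 2 (Expansion), Maker spends further moves so that in $D_M$ every set $S$ with $|S| \leq n/(\log n)^{10}$ has at least $2|S|$ out-neighbors in $V\setminus S$, and symmetrically at least $2|S|$ in-neighbors in $V\setminus S$. In Phase 3 (Completion), starting from such a good expander, Maker applies a directed P\'osa rotation-extension argument: the appropriate booster lemma states that in a good expander $D$ with longest directed path $P$, there are $\Omega(n^2)$ ordered pairs $(u,v)$ such that adding the arc $u\to v$ either strictly lengthens $P$ or closes a directed Hamilton cycle. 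Maker iteratively claims such boosters; since Breaker has claimed only $O(n\log n)$ arcs in total, boosters remain available for each of the at most $n$ extensions needed.

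The main technical obstacle is Phase 2: Maker must enforce a global expansion property against an adversarial Breaker within $O(n\log n)$ moves at bias $\beta n/\log n$. The standard tactic is an Erd\H{o}s-Selfridge style potential argument: to each candidate bad set $S$ we attach a potential measuring the risk that $D_M$ fails to expand $S$, and Maker plays the arc that maximally decreases the total potential. A union bound over all relevant sizes then shows that the total potential stays $o(1)$ throughout the game, so no bad set survives in $D_M$. In the directed setting we must simultaneously control in- and out-expansion, roughly doubling the count of bad events, but this only affects the constant $\beta$. The Phase 1 box games and the Phase 3 booster arguments are routine adaptations of their undirected counterparts with orientations inserted in the appropriate places.
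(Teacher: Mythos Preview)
Your upper bound argument is correct and matches the paper's. For the lower bound the paper takes a genuinely different route. Rather than building an expander deterministically via a potential function and then invoking a structural booster lemma, the paper has Maker play the \emph{randomised} min-degree strategy of Theorem~\ref{th1a}: Maker repeatedly eases the most dangerous vertex by claiming a \emph{uniformly random} available incident edge, so that each vertex $v$ acquires random sets $OUT(v), IN(v)$ of size $K=\theta\log n$, each uniform in some adversarially chosen set of size at least $(1-\alpha)n$. Hamiltonicity of $D_M$ is then established probabilistically, by running an Angluin--Valiant path/cycle growing procedure on $D_M$ and showing it terminates w.h.p.\ before any $OUT$-list is exhausted; the key estimate~\eqref{CPstar} on the set $\bar{U}^*$ is a first-moment union bound over the random $IN$-sets. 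No deterministic booster lemma appears.

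Your proposal has a real gap in Phase~3. The assertion that a directed P\'osa rotation--extension argument yields $\Omega(n^2)$ boosters in a good directed expander is not a ``routine adaptation'' of the undirected case. Undirected P\'osa rotation on a path $v_1\cdots v_k$ uses an edge $\{v_k,v_i\}$ to produce a new path $v_1\cdots v_i v_k v_{k-1}\cdots v_{i+1}$ with a new endpoint; in a digraph, an arc $v_k\to v_i$ yields only a cycle plus a leftover stub, and no new endpoint arises by reversing a segment because arc directions are fixed. There is no off-the-shelf directed booster lemma of the form you state, and extracting one from two-sided expansion alone is exactly the difficulty the paper sidesteps by exploiting the randomness of Maker's edges together with the Angluin--Valiant analysis. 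If you wish to pursue the expander--booster route in the directed setting, Phase~3 needs a substantially new argument (and Phase~2 via Erd\H{o}s--Selfridge over all small sets is also not as routine as you suggest); the one-line appeal to the undirected theory does not suffice.
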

The structure of the paper is as follows: Section \ref{degree} deals with a game played on an {\em undirected} $n$-regular graph $G$. We show how Maker can exert some control over the minimum degree in the graph $G_M$ induced by Maker's edges and over the maximum degree in the graph $G_B$ induced by Breaker's edges. We apply the theorem in the context of the bipartite graph $K_{n,n}$ using the natural relationship between a digraph $D=(V,E)$ and the bipartite graph $\G=(V,V,A)$ where $\set{i,j}\in A$ if and only $(i,j)\in E$. 

Section \ref{BW1} verifies the Breaker win in the Strong Connectivity and Hamilton cycle games for sufficiently large $b$. Section \ref{MW1} verifies the Maker win in the Strong Connectivity game for $b$ not too large.

Section \ref{Ham} verifies the Maker win in the Hamilton Cycle game for $b$ not too large.
\section{Degree bound}\label{degree}
{\bf Notation}
We let $d_M^+(v),d_M^-(v)$ denote the out-degree, in-degree of vertex $v$ in $D_M$ for $v\in [n]$. We define $d_B^+,d_B^-$ similarly. We let $E_M,E_B$ denote the edges claimed by Maker and Breaker respectively, on termination.

The following Theorem is a straightforward generalisation of results from Chapter 5 of Hefetz, Krivelevich, Stojakovi\'c and Szabo \cite{Book}. Let $b=\frac{\b n}{\log n}$ be \B's bias, where $\b<1$ is a constant. Note that it deals with undirected graphs. As already mentioned above, we apply it to $K_{n,n}$ and we will see that it gives control for Maker over in- and out-degrees.
\begin{theorem}\label{th1a}
Let $G=(V,E)$ be an $n$-regular graph and let $\a\in (\b,1)$ and suppose that  $2/\a\leq K\leq \th \log n$ where $2\th<\frac{\a-\b}{\b}$ is a constant. Then the following holds: Maker has a randomised strategy that with positive probability can in at most $K|V|$ rounds ensure that Maker's graph has minimum degree $K$ and Breaker's graph has maximum degree at most $\a n$. Furthermore, Maker always chooses a vertex $v$ and then randomly chooses an edge incident with $v$ from a set of size at least $(1-\a)n$.
\end{theorem}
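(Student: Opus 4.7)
My plan is to adapt the randomised minimum-degree game strategy of Chapter~5 of \cite{Book} to this $n$-regular setting. Maker's strategy at each of her turns is to pick a target vertex $v^{*}\in V$ by a priority rule combining ``raise min-degree'' with ``defend the heaviest Breaker-degree vertex'', and then to claim a uniformly random edge of $G$ incident to $v^{*}$ that is still free. One concrete option is: let $S=\{v\in V:d_M(v)<K\}$ and take $v^{*}\in S$ maximising $d_B(v^{*})$, with ties broken arbitrarily; Maker stops when $S=\emptyset$.

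The round count and free-edge count are straightforward bookkeeping. With $\Phi(t)=\sum_{v}\max(K-d_M(v,t),0)$ we have $\Phi(0)=K|V|$ and each Maker round drops $\Phi$ by at least $1$, giving at most $K|V|$ rounds. At the moment of play the free edges at $v^{*}$ number $n-d_M(v^{*})-d_B(v^{*})\ge n-K-\a n\ge (1-\a)n$, as long as the invariant $d_B(v)\le \a n$ is maintained throughout (using $K=O(\log n)$).

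The crux is to establish this invariant with positive probability. Fixing a vertex $u\in V$, I would bound $\Pr[d_B(u)>\a n]<1/|V|$ and union-bound over $u$. Following the argument of Chapter~5 of \cite{Book}, this is handled via a weight function of Erd\H{o}s--Selfridge type, roughly $W(t)=\sum_{v}\l^{d_B(v,t)}$ for a suitable $\l=\l(\a,\b,\th)>1$: each Breaker move at $v$ multiplies the $v$-summand by $\l$, while Maker's random choice at the currently heaviest $v^{*}$ is designed to reduce $W$ in expectation, since she draws uniformly from the $\ge (1-\a)n$ free edges at $v^{*}$ and thus removes one of Breaker's future threats at that vertex. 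The hypotheses $\th\b<\a-\b$ and $\th\b<\a$, combined with $2/\a\le K$, are exactly what permit $\l$ to be chosen so that the multiplicative growth of $W$ over the $\le K|V|$ rounds stays below $\l^{\a|V|}/|V|$, whence Markov's inequality yields the required probability bound.

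The main obstacle is the calibration of $\l$: Breaker can concentrate all $b$ of his per-round edges at the current heaviest vertex, giving a worst-case multiplicative factor $\l^{b}$ per round, while Maker's single random move yields only a $\Theta(1/n)$-expected reduction. The condition $\th\b<\a-\b$, equivalent to $Kb<(\a-\b)|V|$, is precisely the margin needed so that Maker's accumulated defensive effort across all threatened vertices dominates Breaker's adversarial concentration; the assumption $2/\a\le K$ ensures the weight-function exponent $\a|V|$ is large enough relative to $K$ to close the bound. Beyond this calibration I do not expect the directed generalisation to require ideas outside those of Chapter~5 of \cite{Book}.
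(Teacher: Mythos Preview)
Your proposed weight function $W(t)=\sum_{v}\l^{d_B(v,t)}$ does not do what you say it does: Maker's move claims an edge at $v^{*}$, which increments $d_M(v^{*})$ and leaves every $d_B(\cdot)$ unchanged, so $W$ is literally untouched by Maker's turn. The sentence ``she draws uniformly from the $\ge(1-\a)n$ free edges at $v^{*}$ and thus removes one of Breaker's future threats at that vertex'' has no counterpart in the potential; against an adaptive Breaker there is no way to cash in ``future threats'' through $\l^{d_B}$. Consequently there is no calibration of $\l$ that can make the growth of $W$ over $K|V|$ rounds beat $\l^{\a n}/|V|$, and the Markov step collapses. You have also misremembered Chapter~5 of \cite{Book}: the minimum-degree game there is not handled by an Erd\H{o}s--Selfridge weight at all.

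What the paper (and the book) actually do is purely deterministic. One defines the \emph{danger} $\dg(v)=d_B(v)-2b\,d_M(v)$ and has Maker ease a vertex of maximum danger at every turn; the random choice of \emph{which} free edge at that vertex plays no role in the degree bound. Assuming for contradiction that some $v_g$ reaches $d_B(v_g)>\a n$ after round $g-1$, one tracks the average danger over the shrinking sets $J_i=\{v_{i+1},\dots,v_g\}$, proves four one-step inequalities comparing $\bdg$ before and after Maker's and Breaker's moves (easing drops the maximum by $2b$; Breaker adds at most $b$ edges with both endpoints in $J_i$, etc.), and telescopes them to obtain $0=\bdg(M_1)\ge \a n - Kb - b(1+\log n+\log\log n)-o(n)$, contradicting $\th<(\a-\b)/\b$. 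The point is that $d_M$ must appear in the potential so that Maker's move visibly lowers it by $2b$; if you want to salvage your exponential-weight heuristic, the exponent would have to be $\dg(v)$, not $d_B(v)$, and even then you would be re-deriving the danger argument rather than an Erd\H{o}s--Selfridge one.
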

The proof of this involves a minor modification of the proof in \cite{Book}. We have for completeness provided a condensed proof in an appendix. Of course, having a randomized strategy in this context, also means implies the existence of a deterministic strategy.

Now a digraph $D$ on vertex set $[n]$ can be associated with a bipartite graph $G$ on vertex set $A\cup B$ where $A=\set{a_1,\ldots,a_n},B=\set{b_1,\ldots,b_n}$ and where oriented edge $(i,j)$ is replaced by the edge $\set{a_i,b_j}$. In this way the out-degree of $k$ in $D$ is the degree of $a_k$ in $G$ and the in-degree of $k$ is the degree of $b_k$ in $G$. It follows from Theorem \ref{th1a} that \M\ can ensure that $D_M$ has minimum in- and out-degree at least $K$ after at most $2Kn$ rounds. And that $D_B$ has maximum in- and out-degree at most $\a n$.
\section{Strong Connectivity}\label{BW1}
\subsection{Breaker win}
We now consider the game to be played on the complete bipartite graph $K_{n,n}$ where the bipartition is $A\cup B$ with $|A|=|B|=n$. Breaker's aim is to claim all the edges incident with some vertex $a\in A$. This is essentially the {\em box game} of Chv\'atal and Erd\H{o}s \cite{CE}. We let {\em box} $A_i=\set{\set{i,b}:b\in B}$ for $i\in A$. \B\ claims $b$ elements from the boxes and \M\ claims one whole box in each turn. The claimed upper bound follows from Theorem 2.1 of \cite{CE}. 

Note that this also verfies the \B\ win in Theorem \ref{th2}.
\subsection{Maker win}\label{MW1}
Because \M\ chooses neighbors randomly, small sets must have edges entering and leaving.
\begin{lemma}\label{exp}
Suppose that $K\geq (2-2\log(1-\a))/\a$ is sufficiently large with respect to $\a$. Then, w.h.p., $S\subseteq [n]$, $|S|\leq (1-\a)^2n$ implies that 
\[
\set{(i,j)\in E_M:i\in S,j\notin S}\neq \emptyset\text{ and }\set{(i,j)\in E_M:i\notin S,j\in S}\neq \emptyset.
\]
\end{lemma}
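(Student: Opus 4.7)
The plan is to apply Theorem~\ref{th1a} via the bipartite correspondence of Section~2 and then bound the failure probability by a direct first-moment calculation. Under Maker's randomized strategy, $D_M$ has minimum in- and out-degree at least $K$, and every one of Maker's random choices is uniform over an available set of size at least $(1-\a)n$. I will prove the out-edge statement; the in-edge assertion follows by interchanging the roles of $a_i$ and $b_j$ in the bipartite picture.

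Fix $S\subseteq[n]$ with $|S|=s\geq 2$ (the case $s=1$ is vacuous, as $D_M$ has no self-loops and positive out-degree). The bad event is that every edge of $E_M$ with tail in $S$ has head in $S$. For each $i\in S$, select $K$ rounds in which Maker plays an edge at $a_i$, giving $Ks$ rounds in total. In any such round, the uniform random choice is drawn from at least $(1-\a)n$ available edges, of which at most $s$ have head $b_j$ with $j\in S$. Hence, conditional on the full game history, the probability that the chosen edge lands in $S$ is at most $s/((1-\a)n)$. Multiplying these conditional probabilities via the chain rule yields
\[
\Pr[S\text{ is bad}]\;\leq\;\left(\frac{s}{(1-\a)n}\right)^{Ks}.
\]

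A union bound together with $\binom{n}{s}\leq(en/s)^s$ then bounds the total failure probability by
\[
\sum_{s=2}^{(1-\a)^2 n}\left(\frac{e\,s^{K-1}}{(1-\a)^K n^{K-1}}\right)^s,
\]
which I expect to be $o(1)$ under the stated hypothesis on $K$. The sum splits into two regimes. For constant or slowly growing $s$, the $(s/n)^{K-1}$ factor alone makes each term polynomially small in $n$, and there are only polynomially many such terms. For $s$ a constant fraction of $n$ (up to $(1-\a)^2 n$), the bracketed quantity is bounded by $e(1-\a)^{K-2}$, and the assumption $K\geq(2-2\log(1-\a))/\a$ keeps this strictly below $1$ by a fixed margin, so those terms decay geometrically in $s$ and their tail is $o(1)$.

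The one subtlety I would flag as the main technical point is the adaptivity of Maker's randomized strategy: the chain-rule multiplication above is legitimate precisely because Theorem~\ref{th1a} guarantees a deterministic lower bound of $(1-\a)n$ on the size of each available set, so $s/((1-\a)n)$ dominates the conditional probability in every round regardless of the history. Everything else is a routine counting calculation, and the in-edge half of the lemma is obtained by running the identical argument with $a_i$ and $b_j$ swapped.
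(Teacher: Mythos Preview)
Your proposal is correct and follows essentially the same approach as the paper: bound $\Pr[S\text{ bad}]$ by $(s/((1-\a)n))^{Ks}$ using that each of Maker's random choices is uniform over at least $(1-\a)n$ options, then take a union bound over all $S$ and show the resulting sum is $o(1)$. The only cosmetic differences are that the paper starts the sum at $s=K$ (since smaller sets are deterministically fine by the minimum out-degree condition) and simply asserts the sum is $o(1)$, whereas you start at $s=2$, spell out the chain-rule justification for the adaptive randomness, and split the sum into small-$s$ and large-$s$ regimes; these are elaborations, not a different argument.
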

\begin{proof}
The probability that there exists a set violating the condition in the lemma is at most
\[
2\sum_{s=K}^{(1-\a)^2n}\binom{n}{s}\bfrac{s}{(1-\a)n}^{Ks}\leq 2\sum_{s=K}^{(1-\a)^2n}\brac{\frac{ne}{s}\bfrac{s}{(1-\a)n}^K}^s=o(1).
\]
\end{proof}
Assume now that $\b=1-\e$ is a close to one and that $\b=(1+\a)/2$. Now consider the Directed Acyclic Graph (DAG)  with one vertex for each strong component of $D_M$ in which there is an edge $(A,B)$ if there is an edge in $D_M$ directed from $A$ to $B$. (A DAG is a digraph without directed circuits.) We observe that w.h.p. each source and sink in $D_M$ must be associated with a subset of $[n]$ of size at least $(1-\a)^2n$. This follows directly from Lemma \ref{exp}. A smaller sink would have an edge oriented from it to another strong component, contradiction.

It follows that w.h.p. after $2Kn$ rounds, \M\ can make $D_M$ strongly connected in a further $\rdup{(1-\a)^{-4}}$ rounds by adding an edge from each sink to each source. There will be by construction $\Omega(n^2)$ choices of edge available for each such pair and \B\ can only claim $o(n)$ edges in this number of rounds. This completes the proof of Theorem \ref{th1}
\section{Hamiltonicity}\label{Ham}
We show that w.h.p. the digraph constructed  by \M\ is Hamiltonian. For each $v\in [n]$ there are sets in-neighbors $IN(v)$ and out-neighbors $OUT(v)$ of size $K=\th\log n$, where each of the $2n$ $K$-sets have been chosen uniformly from sets $A(v),B(v)$ of size $(1-\a)n$. The sets $A(v),B(v),v\in [n]$ are chosen adversarially.

Our analysis assumes that $\a$ is sufficiently small and $\th$ is sufficiently large, given $\a$, $\th>10/\a^2$ will suffice. Note that we require $2\th\b\leq\a$ so that $2K\b n/\log n\leq\a n$ in order not to violate \M's choices. We also need $\th<\frac{\a-\b}{\b}$, which is required by Theorem \ref{th1a}. This makes $\b$ small compared to $\a$.

We will follow an approach similar to that of Angluin and Valiant \cite{AV}.  We choose an arbitrary vertex denoted $s_P$ to start and at any point during the execution of the algorithm we have (i) a path $P$ that begins at $s_P$ and ends at some vertex $f_P$, (ii) a cycle $C$ disjoint from $P$ and (iii) a set $U=[n]\setminus (V(P)\cup V(C))$. We let $P[a,b]$ denote the sub-path of $P$ that goes from $a$ to $b$. At certain points one of $P$ or $C$ may be empty and we denote this by $\La$.  

The execution of the algorithm does not require the whole of $OUT(v)$ be known at the start. As the algorithm progresses it learns more and more about the contents of the lists $OUT(v), v\in [n]$. We will assume that $OUT(v)$ is kept as a randomly ordered list and that its contents are accessed through a pointer $out(v)$. Initially only one vertex of $OUT(v)$ is known for each $v\in [n]$ i.e. the first vertex of the list and this is pointed to by $out(v)$. These pointers are updated to the next vertex in the list, after a selection is made from $OUT(v)$. The actual choices of vertices in the OUT sets are exposed one by one as the algorithm progresses. This is usually referred to as {\em deferred decisions}. Thus the algorithm will ask for $out(v)$. A random choice will be made and then move the pointer $out(v)$ to the next place in the list. Imagine then that the $OUT(v)$ are ordered lists of boxes, each containing a random integer (from some large set). 

Maker's strategy can be summarised as follows: the aim is execute the algorithm until $C$ is a Hamilton cycle. While $|U|\geq 2\a n$, she just tries to grow $P$ by adding the edge $(f_P,out(f_P)$ if $out(f_P)\in U$ to the tail end of $P$. Once $|U|<2\a n$ there are more possibilities. If $C=\La$ then she examines $out(f_P)$. If $y=out(f_P)\in U$ then $P$ grows. If $y\in P$ is far from $f_P$ along $P$ then the edge of $P$ pointing into $y$ is deleted and we are left with a smaller $P$ and a cycle $C$ of size at least $2\a n$. If $C\neq\La$ then she examines $y=out(f_P)$. If $y\in U$ then $P$ grows and $C$ is unchanged. If $y\in C$ then she creates a new path from $P\cup C$ by deleting the edge of $C$ pointing into $y$. This continues until $P\cup C=[n]$ and then w.h.p. after $O(n)$ more steps we find that $|C|=n$. 

Next let
\[
\bar{U}^*=\set{v\notin U:\;\exists u\in U\ s.t.\ v\in IN(u)}.
\]
A general step of the process proceeds as follows: we begin with $P=(s_P=f_P),C=\La$ and $U=[n]\setminus \set{s_P}$ for an arbitrary choice of vertex $s_P$. \\
For $x\in P\cup C$ we let $\p(x)$ denote the unique vertex $z$ such that $(z,x)$ is an edge of $P\cup C$. 

While $|U|\geq 2\a n$, we simply try to grow $P$ by attaching an edge $(f_P,u)$ where $u=out(f_P)\in U$. 

If $|U|<2\a n$ and $C=\La$ then we try to create a large cycle $C$ by adding an edge from $f_P$ to $y\neq s_P, y\in P$. We then delete the previous edge $(x,y)$ of $P$ that points to $y$. The size of $U$ is decreased when $C=\La$ and the vertex $x\in \bar{U}^*$. In this case we can extend $P$ by adding an edge $(x,u),u\in U$ where $x\in IN(u)$. If $C\neq \La$ then we wait until we add another edge $(f_P,z)$ where $z\in C$ and then make one long path, perhaps reducing $|U|$ by adding an edge pointing into $U$.

The reader will notice that we avoid adding the edge $(f_P,s_P)$ if $s_P=out(f_P)$. This is just a matter of convenience. It saves adding a case. The details are as follows: Recall that in what follows $P$ goes from its start $s_P$ to its finish $f_P$ and $y=out(f_P)$.
\begin{enumerate}[{\bf C{a}se 1}]
\item $|U|\geq 2\a n$ (and so $C=\La$). If $y\in U$ then $P\gets P+(f_P,y)$ and $U\gets U\setminus \set{y}$.
\item $|U|<2\a n$.
\begin{enumerate}
\item If $C\neq\La$ and $y\in C$ then $P\gets P[s_P,y]+C[y,\p(y)]$, $C\gets \La$.
\item If $C=\La$ and $y\in P$ and $x=\p(y)\notin \bar{U}^*$  and $y\neq s_P$ is distance at least $2\a n$ from $f_P$ along $P$ then $P\gets P[s_P,x]$ and $C\gets P[y,f_P]+(f_P,y)$.
\item If $C=\La$ and $y\in P$ and $x=\p(y)\in \bar{U}^*$  and $y\neq s_P$ is distance at least $2\a n$ from $f_P$ along $P$ then $P\gets P[s_P,x]+(x,u)$ and $C\gets P[y,f_P]+(f_P,y)$ where $u\in U$ and $x\in IN(u)$. $U\gets U\setminus \set{u}$.\\
\end{enumerate}
\end{enumerate}

\vspace{-.35in}
If none of these cases are applicable, then move $out(f_P)$ to the next vertex on its list. 

\bigskip
It follows that $|C|=0$ or $|C|\geq 2\a n$ throughout. The pointers $out$ are updated if necessary to the next vertex on the list, if they are used in a step. Also, the above procedure fails if it reaches the end of a vertex list before creating a Hamilton cycle. 

Next let $X_i$ be the number of edges examined in order to increase $|P|+|C|$ from $i$ to $i+1$. Note that all random choices can be ascribed to a choice of $out(f_P)$. We now discuss the distribution of the $X_i$.
\begin{enumerate}[(a)]
\item If $|U|\geq 2\a n$ then $X_i$ is dominated by the geometric random variable $Geo(p_1)$ where $p_1=\frac{|U|-\a n}{n}\geq \a$.\\ \\
This is because $f_P$ has at least $|U|-\a n$ choices available to it in $U$ for the next choice of vertex in $OUT(f_P)$. A step here means opening an OUT box and looking inside. 
\item 
Now consider the case where $C=\La$ and $1\leq |U|< 2\a n$. At this point we need a lower bound on the size of $|\bar{U}^*|$. In Section \ref{4.1} below, we will prove the following bound that holds w.h.p. throughout the process:
\beq{CPstar}{
|\bar{U}^*|\geq \begin{cases}n/20&|U|\geq \frac{n}{\th\log n}.\\\th^{1/2}|U|\log n&|U|<\frac{n}{\th\log n}.\end{cases}
}
\end{enumerate}
We will now use the above to estimate how long it takes to finish the process. We will justify it at the end of this section. Let us first ignore the sizes of the sets $OUT(v),v\in [n]$ and deal with this issue later. Each $X_i$ can be coupled with and bounded by an independent geometric random variable with probability of success, $p_i$ say. We will see that w.h.p. we obtain $P\cup C=[n]$ in less than $O(n)$ trials. Here a trial means exposure of $out(v)$. Our high probability bound on the number of trials will follow from the Chebyshev inequality and from the fact that $\E(Geo(p))=\frac{1}{p}$ and $\Var(Geo(p))=\frac{1-p}{p^2}$. 

When $|U|\geq 2\a n$ we take $p_i\geq \a$. When $\frac{n}{\th\log n}\leq |U|\leq 2\a n$ we can take $p_i=\a(1-\a)(1/40-\a)$. When $|U|<\frac{n}{\th\log n}$ we take $p_i=\a(1-\a)\th^{1/2}|U|/6n$. We can argue this as follows. Suppose that $C=\La$. If at least half of $\bar{U}^*$ is on $P$ and is further than $2\a n$ from $f_P$ then we are in good shape. In this case there is a probability $p$ at least $(1-\a)/2n$ times the RHS of \eqref{CPstar} of  reducing $|U|$ by one. Otherwise, with probability at least $(1-\a)/2$, we create a cycle of size at least $n/2$ and then with probablity at least 1/3 we will be in good shape after the next choice of $out(f_P)$. So, when $|U|<2\a n$ we reduce $|U|$ by one in at most four steps, with probability at least $p/6$. If $C\neq \La$ then there is a probability of at least $\a$ of making it equal to $\La$ in one step. Thus if $T$ denotes the total number of trials then we have, 
\begin{align*}
\E(T)&\leq \frac{n}{\a}+4\sum_{u=n/\th\log n}^{2\a n}\frac{6}{\a(1-\a)(1/40-\a)}+ 4\sum_{u=1}^{n/\th\log n}\frac{n}{6\a(1-\a)u\th^{1/2}\log n}=\Theta(n).\\
\Var(T)&\leq \frac{n}{\a}+16\sum_{u=n/\th\log n}^{2\a n}\frac{36}{\a^2(1-\a)^2(1/40-\a)^2}+ 16\sum_{u=1}^{n/\th\log n}\frac{n^2}{36\a^2(1-\a)^2u^2\th\log^2n}=o(n^2).
\end{align*}
The Chebyshev inequality now implies that w.h.p. the number of trials needed is at most $Cn$ for some constant $C=C(\a)>0$.

We now deal with the sizes of the sets $OUT(v),v\in [n]$. We need to show that w.h.p. we do not come to the end of a list. A given vertex $v$ has probabilty at most $q=\frac{1}{(1-\a-o(1))n}$ of being selected as the next $y$ and this implies that the probability $K=\th\log n$ items on its OUT list are examined is at most $\Pr(Bin(Cn,q)\geq K)=o(n^{-1})$. 

Once $P\cup C=[n]$, it takes $O(n)$ expected time to create a Hamilton cycle. Let us go through the possibilities. 
\begin{enumerate}[(i)]
\item If $C=\La$ and $s_P\in B(f_P)$ or $f_P\in A(s_P)$ then the process finishes in one more step with probability at least $1/n$.
\item If $C=\La$ and (i) does not hold, then we update $out(f_P)$. Note that there is a probability of at least $1-2\a$ that we will now be in case (iii).
\item If $C\neq \La$ then there is a probability of at least $\a$ that $out(f_P)\in C$ and we are in (i).
\end{enumerate}
\subsection{Proof of \eqref{CPstar}}\label{4.1}
We estimate the probability that there is a subset $U, |U|\leq 2\a n$ for which $|\bar{U}^*|\leq \g |U|$ for $\g$ inferred by \eqref{CPstar}. We bound this probability by 
\begin{align}
\sum_{k=1}^{2\a n}\binom{n}{k}\binom{n-k}{\g k}\bfrac{(\g+1)k}{(1-\a)n}^{k\th\log n} &\leq \sum_{k=1}^{2\a n}\bfrac{ne}{k}^k\bfrac{ne}{\g k}^{\g k}\bfrac{(\g+1)k}{(1-\a)n}^{k\th\log n}  \label{exp1}\\
&=\sum_{k=1}^{2\a n}\brac{\bfrac{(\g+1)k}{(1-\a)n}^{\th\log n-\g-1}\cdot (1-\a)e(\g+1)\cdot\bfrac{(\g+1)e}{\g(1-\a)}^\g }^k\nonumber\\
&\leq \sum_{k=1}^{2\a n}\brac{\bfrac{(\g+1)k}{(1-\a)n}^{\th\log n-\g-1}\cdot \g e^{2\g}}^k.\label{exp2}
\end{align}
{\bf Explanation for \eqref{exp1}:} we choose a set of size $k$ for $U$ and then a set of size $\g k$ for $\bar{U}^*$. Then we estimate the probability that each choice in $IN(U)$ is in $U$ or the set of size $\g k$ that we have chosen.

When $k=|U|\geq n/\th\log n$ we take $\g+1=n/10k\leq \th\log n/10$ and we note that $\g  k\geq n/20$. We then have
\[
\sum_{k=n/\th\log n}^{2\a n}\brac{\bfrac{(\g+1)k}{(1-\a)n}^{\th\log n-\g-1}\cdot \g e^{2\g}}^k \leq \sum_{k=n/\th\log n}^{2\a n}\brac{\bfrac{1}{10(1-\a)}^{9\th\log n/10-1}\cdot \th e^{\th\log n/5}\log n}^k=o(1).
\]
For $1\leq k\leq n/\th\log n$ we take $\g=\th^{1/2}\log n$. And then,
\[
\sum_{k=1}^{n/\th\log n}\brac{\bfrac{(\g+1)k}{(1-\a)n}^{\th\log n-\g-1}\cdot \g e^{2\g}}^k \leq \sum_{k=1}^{n/\th\log n}\brac{\bfrac{2}{\th^{1/2}}^{9\th\log n/10}\cdot \th^{1/2}e^{2\th^{1/2}\log n}\log n}^k=o(1).
\]
 This completes the proof of Theorem \ref{th2}

\paragraph{Acknowledgement} We thank Adnane Fouadi for pointing to some defiiciencies in the proof ot Theorem \ref{th1a}.
\section{Conclusion}
We solved the strong connectivity game, but there is a big gap between the upper and lower bounds for Hamiltonicity. Closing this gap is an interesting open problem.

\appendix
\section{Proof of Theorem \ref{th1a}}
We let $G_M,G_B$ denote the subgraphs of $G$ with the edges taken by Maker, Breaker respectively. We let $d_M(v)$ denote the degree of vertex $v$ in $G_M$ for $v\in V$. We define $d_B$ similarly.
Let $\dg(v)=d_B(v)-2bd_M(v)$ be the {\em danger} of vertex $v$ at any time. {\red A vertex is {\em dangerous} for Maker if $d_M(v)<K$.}

{\bf Maker's Strategy:} In round $i$, choose a {\red dangerous} vertex $v_i$ of maximum danger and choose a random edge incident with $v_i$, not already taken. This is called {\em easing} $v$.

We claim that Maker can ensure that for all $v\in V$, we have that {\red $d_B(v)<\a n$ as long as $d_M(v)<K$.} Let $M_i,B_i$ denote Maker and Breaker's $i$th moves. Suppose that Breaker wins in round $g-1$, so that after $B_{g-1}$ there is a vertex $v_g$ such that $d_M(v_g)<K$ and $d_B(v_g)>\a n$. Let $J_i=\set{v_{i+1},\ldots,v_g}$. Next define 
\[
\bdg(M_i)=\frac{\sum_{v\in J_{i-1}}\dg(v)}{|J_{i-1}|}\quad\text{ and }\quad\bdg(B_i)=\frac{\sum_{v\in J_{i}}\dg(v)}{|J_{i}|},
\]
computed before the $i$th moves of Maker, Breaker respectively. 

Then $\bdg(M_1)=0$ and $\bdg(M_g)=\dg(v_g)\geq \a n-2Kb$. Let $a(i)$ be the number of edges contained in $J_i$ that are claimed by \B\ in his first $i$ moves.We have
\begin{lemma}\label{lem1a}
\begin{align}
\bdg(M_i)&\geq \bdg(B_i).\label{appa1}\\
\bdg(M_i)&\geq \bdg(B_i)+\frac{2b}{|J_i|},\text{ if }J_i=J_{i-1}.\label{appa2}\\
\bdg(B_i)&\geq \bdg(M_{i+1})-\frac{2b}{|J_i|}\label{appa3}\\
\bdg(B_i)&\geq \bdg(M_{i+1})-\frac{b+a(i)-a(i-1)}{|J_i|}-1.\label{appa4}
\end{align}
\end{lemma}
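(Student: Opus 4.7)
The plan is to prove each of the four inequalities by tracking the total danger $\sum_{v\in J}\dg(v)$ across the single Maker or Breaker move that separates the two snapshots being compared. Three elementary observations drive everything. (i) Maker's $i$th move picks an edge incident to $v_i$, so it decreases $\dg(v_i)$ by $2b$, and decreases $\dg$ of the other endpoint by $2b$ as well. (ii) Each of Breaker's $b$ edges in round $i$ raises $\dg$ by $1$ at each of its two endpoints. (iii) Since $v_i$ was chosen to maximise danger, $\dg(v_i)$ is at least the average of $\dg$ over $J_i$ at the time of the choice.

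For \eqref{appa1} and \eqref{appa2}, interpret $J_i$ as a set, so that $J_{i-1}=J_i\cup\{v_i\}$ with $|J_{i-1}|=|J_i|+1$ when $v_i\notin J_i$, and $J_{i-1}=J_i$ otherwise. In the first case, observation (iii) implies $\bdg(M_i)\geq \frac{1}{|J_i|}\sum_{v\in J_i}\dg(v)$ before $M_i$; then by (i), Maker's move can only decrease $\sum_{J_i}\dg$, giving $\bdg(B_i)\leq\bdg(M_i)$. In the second case --- the hypothesis of \eqref{appa2} --- $\bdg(M_i)$ equals that same pre-move average on $J_i$ exactly, and Maker eases $v_i\in J_i$, which reduces $\sum_{J_i}\dg$ by at least $2b$ (from $v_i$ itself, plus possibly a further $2b$ if the other endpoint also lies in $J_i$). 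Dividing by $|J_i|$ yields the gap $2b/|J_i|$ in \eqref{appa2}, and of course implies \eqref{appa1} in this case too.

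For \eqref{appa3} and \eqref{appa4}, only Breaker moves between $B_i$ and $M_{i+1}$, and $J_{(i+1)-1}=J_i$, so both averages are over the same set $J_i$ and differ only through the effect of $B_i$. Classifying the $b$ edges of round $i$ by how many endpoints lie in $J_i$, and letting $c_i$ be the number with both endpoints in $J_i$, the rise of $\sum_{J_i}\dg$ is at most $2c_i+(b-c_i)=b+c_i$. The crude bound $c_i\leq b$ gives \eqref{appa3}. For the sharper \eqref{appa4}, observe that $c_i=a(i)-a(i-1)$ exactly when $J_{i-1}=J_i$; when instead $v_i\notin J_i$, one gets $a(i)-a(i-1)=c_i-x$, where $x$ counts pre-existing Breaker edges from $v_i$ into $J_i$. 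The trivial bound $x\leq|J_i|$ then produces an additive $x/|J_i|\leq 1$ after dividing, which is precisely the $-1$ on the right-hand side of \eqref{appa4}.

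The only real subtlety is the case split on whether $v_i\in J_i$, which is where both the exchange step in \eqref{appa1}--\eqref{appa2} and the extra $-1$ in \eqref{appa4} arise; everything else is just accounting for the at most two endpoints of each edge that can touch $J_i$.
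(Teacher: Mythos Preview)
Your proof is correct and follows essentially the same route as the paper's: both track the change in $\sum_{J}\dg$ across a single move, use the maximality of $\dg(v_i)$ to handle the passage from $J_{i-1}$ to $J_i$, and for \eqref{appa4} introduce the count of Breaker's round-$i$ edges lying inside $J_i$ (your $c_i$ is the paper's $e_{\mathrm{double}}$) together with the bound $x\le |J_i|$ on pre-existing Breaker edges from $v_i$ into $J_i$. Your write-up is simply more explicit about the case split $v_i\in J_i$ versus $v_i\notin J_i$ and about why removing the maximum-danger vertex cannot raise the average.
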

\begin{proof}
Equation \eqref{appa1} follows from the fact that a move by \M\ does not increase danger. Equation \eqref{appa2} follows from the fact that if $v_i\in J_{i-1}$ then its danger, which is a maximum, drops by $2b$. Equation \eqref{appa3} follows from the fact that \B\ takes at most $b$ edges inside $J_i$. For equation \ref{appa4}, let $e_{double}$ be the number of edges that \B\ adds to $J_i$ in round $B_i$. Then 
\[
\bdg(B_i)\geq \bdg(M_{i+1})-\frac{b+e_{double}}{|J_i|}
\]
and
\[
a(i)-e_{double}\geq a(i-1)-|J_i|.
\]
\end{proof}
It follows that 
\begin{align}
\bdg(M_i)&\geq \bdg(M_{i+1})\text{ if }J_i=J_{i-1}.\label{appa5}\\
\bdg(M_i)&\geq \bdg(M_{i+1})-\min\set{\frac{2b}{|J_i|},\frac{b+a(i)-a(i-1)}{|J_i|}-1}.\label{appa6}
\end{align}
Next let $1\leq i_1\leq \cdots\leq i_r\leq g-1$ be the indices where $J_i\neq J_{i-1}$. Then we have $|J_{i_r}|=|J_{g-1}|=1$ and $|J_{i_1-1}|=|J_0|=r+1$. Let $k=\frac{n}{\log n}$ and assume first that $r\geq k$ and then use the first minimand in \eqref{appa6} for $i_1,\ldots,i_{r-k}$ and the second minimand otherwise.
\begin{align}
0&=\bdg(M_1)\nonumber\\
&\geq \bdg(M_g)-\frac{b+a(i_r)-a(i_r-1)}{|J_r|}-\cdots- \frac{b+a(i_{r-k+1})-a(i_{r-k+1}-1)}{|J_{r-k+1}|} \nonumber\\
&\hspace{4in}-k-\frac{2b}{|J_{i_{r-k}}|}-\cdots-\frac{2b}{|J_{i_1}|}\label{appa7}\\
&\geq  \bdg(M_g)-\frac{b}{1}-\cdots-\frac{b}{k}-\frac{a(i_r)}{1}-k-\frac{2b}{k+1}-\cdots- \frac{2b}{r}\label{appa8}\\
&\geq \a n-2Kb-b(1+\log k)-k-2b(\log n-\log k).\nonumber
\end{align}
To go from \eqref{appa7} to \eqref{appa8} we use $a(i_{r-j}-1)\geq a(i_{r-j-1}),j>0$ which follows from $J_{i_r-j-1}=J_{i_r-j}-1$ and then the coefficient of $a(i_{r-j-1})$ is at least $\frac{1}{j+1}-\frac{1}{j+2}\geq 0$. Also, $a(i_r)=0$ because $J_{i_r}=J_{g-1}=\set{v_g}$.

It follows that
\[
b\geq \frac{\a n-k}{2K+1+\log n+\log\log n+o(1)}\geq\frac{(\a-1/\log n)n}{(1+2\th+o(1))\log n},
\]
contradicting our upper bound, $2\th<\frac{\a-\b}{\b}$.

If $r<k$ then we replace \eqref{appa8} by
\[
0=\bdg(M_1)\geq \bdg(M_g)-\frac{b}{1}-\cdots-\frac{b}{k}-\frac{a(i_r)}{1}-k\geq \a n-2Kb-b(1+\log k)-k
\]
and obtain the same contradiction.
\end{document}